\newtheorem{theorem}{Theorem}[section]
\newtheorem*{theorem*}{Theorem}
\newtheorem{proposition}[theorem]{Proposition}
\theoremstyle{definition}
\numberwithin{equation}{section}
\newcommand{\R}{\mathbb R}
\newcommand{\Z}{\mathbb Z}
\newcommand{\N}{\mathbb N}
\begin{document}

\title{On large deviations for amenable group actions}

\author [Dongmei Zheng, Ercai Chen and Jiahong Yang]{Dongmei Zheng, Ercai Chen and Jiahong Yang}

\address{School of Mathematical Sciences and Institute of Mathematics, Nanjing Normal University, Nanjing 210023, Jiangsu, P.R.China
 \& Department of Applied Mathematics, College of Science, Nanjing Tech University, Nanjing 211816, Jiangsu, P.R. China} \email{dongmzheng@163.com}

\address{School of Mathematical Sciences and Institute of Mathematics, Nanjing Normal University, Nanjing 210023, Jiangsu, P.R.China} \email{ecchen@njnu.edu.cn}

\address{School of Mathematical Sciences and Institute of Mathematics, Nanjing Normal University, Nanjing 210023, Jiangsu, P.R.China} \email{848115840@qq.com}

\subjclass[2000]{Primary: 37A15, 37A60, 60F10}
\thanks{}

\keywords {large deviation, amenable group, entropy, pressure, specification}

\begin{abstract}
By proving an amenable version of Katok's entropy formula and handling the quasi tiling techniques, we establish large deviations bounds for countable discrete amenable group actions. This generalizes the classical results of Lai-Sang Young \cite{Y}.
\end{abstract}

\maketitle

\section{Introduction}
The large deviation theory focuses on convergence properties of stochastic systems. The stochastic systems usually can occur in
various stochastic processes, differential equations or dynamical systems. In the case of dynamical systems, the usual setting is a continuous transformation
on a compact metric space with a reference measure. Then one of interesting questions is to obtain the exponential convergence rate for the reference measure of some subset related to
Birkhoff average.

Many results on the large deviation theory for dynamical systems are related to entropy or pressure. Usually the desired exponential convergence rates are functions in terms of
entropy or pressure. And these results can have applications to many aspects such as statistical mechanics, non-uniformly hyperbolic systems, shift spaces, etc.(\cite{E,K,RY,Y}).

With the development of the theory of dynamical systems, $\Z^d-$ actions are considered as the generalization for
$\Z-$actions. Some large deviation results are proved for these situations \cite{EKW,Ki}. These results rely on the entropy and pressure theory for $\Z^d-$actions.
But now the theory of entropy and pressure goes much further. It is established for actions of groups such as amenable groups and sofic groups beyond $\Z^d$\cite{C,L,LY,O,OP,OW,ST,W}.
In this paper, we will consider the large deviation results of Lai-Sang Young \cite{Y} for actions of countable discrete amenable groups.

Throughout this paper, we let $(X,G)$  be a $G-$action topological dynamical system, where $X$ is a compact metric space with metric $d$ and $G$ a countable discrete amenable group.
A group $G$ is said to be {\it amenable} if there exists a sequence of finite subsets $\{F_n\}$ of $G$ which are asymptotically invariant, i.e.,
$$\lim_{n\rightarrow+\infty}\frac{|F_n\vartriangle gF_n|}{|F_n|}=0, \text{ for all } g\in G.$$
Such sequences are called {\it F{\o}lner sequences}.

For a finite subset $F$ in $G$, $\epsilon>0$ and a function $f$ of $X$, we denote the {\it Bowen ball} associated to $F$ with radius $\epsilon$ by
\begin{align*}
B_{F}(x,\epsilon)&=\{y\in X: d_{F}(x,y)<\epsilon\}\\
&=\{y\in X: d(gx,gy)<\epsilon, \text{ for any }g\in F\}
\end{align*}
and denote the summation and average of $f$ along $F$ by
\begin{align*}
  S_{F}f(x)=\sum_{g\in F}f(gx)
\end{align*}
and
\begin{align*}
  A_{F}f(x)=\frac{1}{|F|}\sum_{g\in F}f(gx)
\end{align*}
respectively.

Let $M(X)$, $M(X,G)$ and $E(X,G)$ denote the collection of Borel probability measures, the $G-$invariant and $G-$ergodic Borel probability measures of $X$ respectively.

A F{\o}lner sequence $\{F_n\}$ in $G$ is said to be {\it tempered} (see Shulman \cite{S}) if there exists a constant $C$ which is independent of $n$ such that
\begin{align*}
|\bigcup_{k<n}F_k^{-1}F_n|\le C|F_n|, \text{ for any }n.
\end{align*}

Let $(X,G,\mu)$ be a measure-theoretic $G-$action dynamical system where $G$ is a countable discrete amenable group and $\mu$ is a $G-$ergodic Borel probability measure on $X$.
The ergodic theorem(\cite{L,W}) states that, if $\{F_n\}$ is a tempered F{\o}lner sequence in $G$ and $f\in L^1(X,\mathcal{B},\mu)$, then
$$\lim_{n\rightarrow+\infty}A_{F_n}f(x)=\int_{X}f(x)\, d\mu,$$
almost everywhere and in $L^1$.

Let $\{F_n\}$ be a F{\o}lner sequence in $G$. For $\mu\in M(X)$, $\varphi\in C(X,\R)$ and $E\subset \R$, we will consider the exponential growth rate of
$\mu(\{x\in X: A_{F_n}\varphi(x)\in E\})$. Then we have the following large deviations results.

\begin{theorem}\label{th-1}
  Let $(X,G)$ be a compact metric $G-$action topological dynamical system and $G$ a discrete countable amenable group. Let $\{F_n\}$ be a
  tempered F{\o}lner sequence in $G$ with $\lim\limits_{n\rightarrow+\infty}\frac{|F_n|}{\log n}=\infty$. Assume $h_{top}(X,G)<\infty$.
  Then for any $\mu\in M(X)$, $\varphi\in C(X)$ and $c\in \R$,
  \begin{align*}
    \liminf_{n\rightarrow\infty}\frac{1}{|F_n|}\log \mu(\{x\in X: A_{F_n}\varphi(x)>c\})\ge \sup\{h_{\nu}(X,G)-h_{\mu}(\{F_n\};\nu)\},
  \end{align*}
  where the supremum is taken over all $\nu\in E(X,G)$ with $\int\varphi \, d\nu>c$.
\end{theorem}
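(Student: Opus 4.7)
The plan is to adapt the classical scheme of Young \cite{Y} to countable amenable group actions, with the amenable version of Katok's entropy formula announced in the abstract serving as the new technical input.

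\textbf{Reduction via uniform continuity.} Fix $\nu\in E(X,G)$ with $\int\varphi\,d\nu>c$, pick $c'\in(c,\int\varphi\,d\nu)$, and use the uniform continuity of $\varphi$ to choose $\eps_0>0$ such that $d(x,y)<\eps_0$ implies $|\varphi(x)-\varphi(y)|<c'-c$. Then for any $\eps\in(0,\eps_0]$ and $y\in B_{F_n}(x,\eps)$ one has $|A_{F_n}\varphi(y)-A_{F_n}\varphi(x)|<c'-c$, so the condition $A_{F_n}\varphi(x)>c'$ forces $B_{F_n}(x,\eps)\subset\{A_{F_n}\varphi>c\}$.

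\textbf{Good set.} Fix small $\tau>0$ and $\eps\in(0,\eps_0)$, and define
\[
A_n=\bigl\{x\in X:\ A_{F_n}\varphi(x)>c' \text{ and } \mu(B_{F_n}(x,\eps))\geq e^{-|F_n|(h_{\mu}(\{F_n\};\nu)+\tau)}\bigr\}.
\]
The first condition holds on a set of $\nu$-measure tending to $1$ by the tempered-F{\o}lner ergodic theorem recalled in the introduction (using $c'<\int\varphi\,d\nu$); the second, for $\nu$-a.e.\ $x$ and all large $n$, by the very definition of $h_{\mu}(\{F_n\};\nu)$ as the upper local $\mu$-entropy rate along $\nu$-typical orbits. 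Hence $\nu(A_n)\geq 1/2$ for all sufficiently large $n$.

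\textbf{Counting and conclusion.} Let $E_n\subset A_n$ be a maximal $(F_n,2\eps)$-separated subset. The balls $B_{F_n}(x,\eps)$, $x\in E_n$, are pairwise disjoint and all lie in $\{A_{F_n}\varphi>c\}$ by the first step, so
\[
\mu(\{A_{F_n}\varphi>c\})\geq |E_n|\cdot e^{-|F_n|(h_{\mu}(\{F_n\};\nu)+\tau)}.
\]
By maximality, $\bigcup_{x\in E_n}B_{F_n}(x,2\eps)\supset A_n$. The amenable Katok entropy formula (the main technical input of the paper) then yields $|E_n|\geq e^{|F_n|(h_{\nu}(X,G)-\tau)}$ for $\eps$ small and $n$ large. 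Combining the two inequalities, dividing by $|F_n|$, taking $\liminf_n$, then $\tau\downarrow 0$, and finally the supremum over admissible $\nu$, gives the claimed bound.

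\textbf{Main obstacle.} The crux is the amenable Katok formula: that for an ergodic $\nu$, the minimum number of Bowen $\eps$-balls along $F_n$ needed to cover any Borel set of $\nu$-measure bounded away from zero grows like $e^{|F_n|h_{\nu}(X,G)}$ as $\eps\downarrow 0$. For $\Z$-actions this is Katok's classical theorem; for general amenable $G$ along a tempered F{\o}lner sequence it requires the Shannon--McMillan--Breiman theorem together with the Ornstein--Weiss quasi-tiling techniques flagged in the abstract. A secondary technicality is verifying that the growth assumption $|F_n|/\log n\to\infty$ is enough to transfer the $\nu$-a.e.\ asymptotics defining $h_\mu(\{F_n\};\nu)$ into the uniform-in-$n$ lower bound on $\nu(A_n)$ used above.
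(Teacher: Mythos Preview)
Your proposal is correct and follows essentially the same route as the paper: the uniform-continuity reduction, the good set (your $A_n$, the paper's $\Gamma$), the maximal separated-set argument, and the appeal to the amenable Katok formula (Theorem~\ref{Katok}) all match the paper's proof. The only minor inaccuracy is in your closing remark: the paper's proof of Theorem~\ref{Katok} uses SMB, the ergodic theorem, and a Hamming-ball/Stirling count but not quasi-tiling, which is reserved for Theorem~\ref{th-3}.
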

We note here that $h_{top}(X,G)$ and $h_{\nu}(X,G)$are the topological and metric entropy of $(X,G)$ respectively and they do not depend on the choice of
the F{\o}lner sequences. $h_{\mu}(\{F_n\};\nu)$ is the $\nu-$relative entropy with respect to $\mu$ for F{\o}lner sequence $\{F_n\}$.
We will give the detailed definitions of these entropies in Section 2.

Now we denote by
\begin{align*}
  \mathcal{V}^+=\{\psi\in C(X,\R): & \exists C,\epsilon>0\\ &s.t.\ \forall x\in X, n\ge 0, \mu(B_{F_n}(x,\epsilon))\le C\exp(-S_{F_n}\psi(x))\}
\end{align*}
and
\begin{align*}
  \mathcal{V}^-=\{\psi\in C(X,\R): &\forall \epsilon_0>0, \exists 0<\epsilon<\epsilon_0 \text{ and } C=C(\epsilon)\\ &s.t.\ \forall x\in X, n\ge 0, \mu(B_{F_n}(x,\epsilon))\ge C\exp(-S_{F_n}\psi(x))\}.
\end{align*}
\begin{theorem}\label{th-2}
  Let $(X,G)$, $\{F_n\}$, $\mu$, $\varphi$ and $c$ be as in Theorem \ref{th-1}.
  Then for $\psi\in \mathcal{V}^+$,
    \begin{align*}
    \limsup_{n\rightarrow\infty}\frac{1}{|F_n|}\log \mu(\{x\in X: A_{F_n}\varphi(x)\ge c\})\le \sup\{h_{\nu}(X,G)-\int\psi \, d\nu\},
    \end{align*}
    where the supremum is taken over all $\nu\in M(X,G)$ with $\int\varphi \, d\nu\ge c$.
\end{theorem}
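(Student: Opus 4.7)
The strategy is a Misiurewicz-type separated-set argument adapted to amenable groups through Ornstein--Weiss quasi-tiling.

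\emph{Reduction to a weighted separated-set count.} First I would localize the set of interest. Put $X_n := \{x \in X : A_{F_n}\varphi(x) \ge c\}$, which is closed. Fix $\epsilon > 0$ and a constant $C$ furnished by the definition of $\mathcal{V}^+$ for $\psi$. Pick a maximal $(F_n,\epsilon)$-separated subset $E_n$ of $X_n$; maximality makes $\{B_{F_n}(x,\epsilon)\}_{x\in E_n}$ a cover of $X_n$, and the $\mathcal{V}^+$ bound gives
\[
\mu(X_n) \le \sum_{x\in E_n} \mu(B_{F_n}(x,\epsilon)) \le C\,Z_n, \qquad Z_n := \sum_{x\in E_n} e^{-S_{F_n}\psi(x)}.
\]
It therefore suffices to bound $\limsup_{n} |F_n|^{-1} \log Z_n$ by the supremum appearing in the statement.

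\emph{Invariant limit measure and entropy identity.} Next I would set
\[
\nu_n := Z_n^{-1}\sum_{x\in E_n} e^{-S_{F_n}\psi(x)}\delta_x, \qquad \sigma_n := \frac{1}{|F_n|}\sum_{g\in F_n} g_*\nu_n,
\]
and exploit the standard Gibbs/Legendre identity for finite partition functions,
\[
\log Z_n = H(\nu_n) - |F_n|\int \psi\, d\sigma_n, \qquad H(\nu_n) = -\sum_{x\in E_n}\nu_n(\{x\})\log \nu_n(\{x\}).
\]
Pass to a subsequence $n_k$ along which $\sigma_{n_k}\to \nu$ in the weak-$\ast$ topology; the F\o lner condition forces $\nu \in M(X,G)$, and the inequality $A_{F_n}\varphi\ge c$ on $E_n$ gives $\int\varphi\, d\nu\ge c$. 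The proof then reduces to
\[
\limsup_{k\to\infty}\frac{H(\nu_{n_k})}{|F_{n_k}|} \le h_\nu(X,G),
\]
since together with the identity above this produces $\limsup |F_n|^{-1}\log Z_n \le h_\nu(X,G) - \int \psi\, d\nu$, which is dominated by the required supremum.

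\emph{Entropy bound via a small partition and quasi-tiling.} For this final inequality I would choose a finite Borel partition $\alpha$ of $X$ with $\operatorname{diam}\alpha < \epsilon$ and $\nu(\partial\alpha)=0$. Each atom of $\alpha_{F_n} := \bigvee_{g\in F_n} g^{-1}\alpha$ has $d_{F_n}$-diameter less than $\epsilon$ and hence meets $E_n$ in at most one point, so $H(\nu_n) = H_{\nu_n}(\alpha_{F_n})$. To pass from $\nu_n$ to the limit $\nu$ I would invoke the Ornstein--Weiss quasi-tiling theorem: given $\delta>0$ and a sufficiently large tempered F\o lner patch $K\subset G$, one $(1-\delta)$-quasi-tiles $F_n$ by translates of a finite family of subsets built from $K$. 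Subadditivity of static Shannon entropy, combined with the $G$-averaging built into $\sigma_n$, converts $H_{\nu_n}(\alpha_{F_n})/|F_n|$ into an average of per-tile contributions of the form $H_{\sigma_n}(\alpha_K)/|K|$ plus an error controlled by $\delta\log|\alpha|$ and the quasi-tile overlap. Sending $n = n_k\to\infty$ (using $\nu(\partial\alpha)=0$ so that $H_{\sigma_{n_k}}(\alpha_K)\to H_\nu(\alpha_K)$), then letting $K$ run through a F\o lner sequence and $\delta\to 0$, yields the required bound.

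\emph{Main obstacle.} The hard part will be executing the quasi-tiling step cleanly. Unlike $\mathbb{Z}^d$-actions, $F_n$ admits no exact partition into translates of smaller F\o lner sets; the Ornstein--Weiss decomposition only fills $F_n$ up to density $1-\delta$ with bounded overlap multiplicity. Tracking these imperfections through the subadditivity of entropy while simultaneously preserving the weak-$\ast$ convergence $\sigma_{n_k}\to\nu$ tile-by-tile is the technical heart of the proof, and is precisely where the quasi-tiling machinery advertised in the abstract is needed.
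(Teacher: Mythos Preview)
Your skeleton---reduce $\mu(X_n)$ to the partition sum $Z_n$, introduce the weighted atomic measure and its $G$-average, use the Gibbs identity $\log Z_n = H_{\nu_n}(\alpha_{F_n}) - |F_n|\int\psi\,d\sigma_n$, extract a weak-$\ast$ limit $\nu\in M(X,G)$ with $\int\varphi\,d\nu\ge c$, and then bound the normalized entropy term by $h_\nu(X,G)$---is exactly the paper's. The one substantive difference is the tool you choose for that last entropy bound. The paper does \emph{not} use Ornstein--Weiss quasi-tiling here; instead it invokes a multi-subadditivity inequality for static Shannon entropy (Lemma~3.1(3) of Huang--Ye--Zhang), which for any fixed finite $F\subset G$ gives
\[
H_{\nu_n}(\alpha_{F_n}) \;\le\; \frac{1}{|F|}\sum_{g\in F_n} H_{g_*\nu_n}(\alpha_F) \;+\; |F^{-1}F_n\setminus F_n|\,\log\#\alpha.
\]
Dividing by $|F_n|$, the first term is bounded by $|F|^{-1}H_{\sigma_n}(\alpha_F)$ by concavity, the second vanishes by the F{\o}lner property, and the boundary condition $\nu(\partial\alpha)=0$ passes the limit to $|F|^{-1}H_\nu(\alpha_F)$; letting $F$ range over all finite subsets yields $h_\nu(X,G)$. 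This is the straight Misiurewicz trick, needing only the F{\o}lner condition and no tiling whatsoever. Your quasi-tiling route would also succeed, but it is heavier than necessary; the paper reserves quasi-tiling for Theorem~\ref{th-3}, where it is genuinely unavoidable. So the ``main obstacle'' you flag is a phantom for this theorem.

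One minor point to tighten: you must first pass to a subsequence along which $|F_n|^{-1}\log Z_n$ attains its $\limsup$, and only then extract a weak-$\ast$ convergent subsubsequence of $\sigma_n$; otherwise the inequality you obtain controls only a subsequential $\limsup$ of $Z_n$, not the full one. The paper does this explicitly.
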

\begin{theorem}\label{th-3}
  Let $(X,G)$, $\{F_n\}$, $\mu$, $\varphi$ and $c$ be as in Theorem \ref{th-1} and in addition, assume that $(X,G)$ has weak specification property.
  Then for $\psi\in \mathcal{V}^-$,
    \begin{align*}
    \liminf_{n\rightarrow\infty}\frac{1}{|F_n|}\log \mu(\{x\in X: A_{F_n}\varphi(x)> c\})\ge \sup\{h_{\nu}(X,G)-\int\psi \, d\nu\},
    \end{align*}
    where the supremum is taken over all $\nu\in M(X,G)$ with $\int\varphi \, d\nu>c$.
\end{theorem}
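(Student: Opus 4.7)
The plan is to build, for any invariant measure $\nu$ with $\int\varphi\,d\nu>c$, a large $(F_n,2\epsilon)$-separated set $E_n$ whose $(F_n,\epsilon)$-Bowen balls already lie in $\{A_{F_n}\varphi>c\}$, and then sum the lower bound on $\mu(B_{F_n}(x,\epsilon))$ furnished by $\psi\in\mathcal{V}^-$ over these disjoint balls. First I would reduce to the case of ergodic $\nu$. Weak specification for amenable group actions is known to imply entropy density of $E(X,G)$ in $M(X,G)$: given $\nu\in M(X,G)$ with $\int\varphi\,d\nu>c$ and $\delta>0$, one produces $\nu_0\in E(X,G)$ weak$^*$-close to $\nu$ with $\int\varphi\,d\nu_0>c$, $\int\psi\,d\nu_0<\int\psi\,d\nu+\delta$, and $h_{\nu_0}(X,G)>h_\nu(X,G)-\delta$; since $\delta$ is arbitrary, the theorem for ergodic measures implies the general statement.

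Now fix ergodic $\nu$. Shrink the working scale first on the $\mathcal{V}^-$ side: for any prescribed $\epsilon_0>0$, the hypothesis gives $\epsilon\in(0,\epsilon_0)$ and $C=C(\epsilon)>0$ with $\mu(B_{F_n}(x,\epsilon))\ge C\exp(-S_{F_n}\psi(x))$ for all $x\in X$ and $n\ge 0$; pick $\epsilon_0$ small enough that the oscillation of $\varphi$ across any $\epsilon$-ball is at most $\eta:=\tfrac{1}{2}(\int\varphi\,d\nu-c)$. Then apply the amenable Katok entropy formula proved earlier in the paper at separation scale $2\epsilon$: for every $\tau\in(0,1)$ and all large $n$, there is an $(F_n,2\epsilon)$-separated set with cardinality at least $\exp(|F_n|(h_\nu(X,G)-\delta))$ inside any preassigned measurable set of $\nu$-measure $\ge 1-\tau$. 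Shulman's pointwise ergodic theorem along the tempered F{\o}lner sequence $\{F_n\}$, applied to $\varphi$ and to $\psi$, shows that
\[
G_n:=\{x:|A_{F_n}\varphi(x)-\textstyle\int\varphi\,d\nu|<\eta,\ |A_{F_n}\psi(x)-\int\psi\,d\nu|<\eta\}
\]
has $\nu(G_n)\to 1$, so the Katok step can be run inside $G_n$, producing $E_n\subset G_n$ of the stated size.

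For $x\in E_n$ and $y\in B_{F_n}(x,\epsilon)$, the choice of $\eta$ and continuity of $\varphi$ force $A_{F_n}\varphi(y)>\int\varphi\,d\nu-2\eta=c$, so $B_{F_n}(x,\epsilon)\subset\{A_{F_n}\varphi>c\}$; the separation makes these balls pairwise disjoint, while $x\in G_n$ gives $S_{F_n}\psi(x)\le|F_n|(\int\psi\,d\nu+\eta)$. Summing the $\mathcal{V}^-$ lower bound,
\[
\mu(\{A_{F_n}\varphi>c\})\ \ge\ C\,|E_n|\,\exp\!\bigl(-|F_n|(\textstyle\int\psi\,d\nu+\eta)\bigr)\ \ge\ C\exp\!\bigl(|F_n|(h_\nu(X,G)-\int\psi\,d\nu-\delta-\eta)\bigr).
\]
Taking $\frac{1}{|F_n|}\log$, passing to $\liminf_n$, and finally sending $\delta,\eta,\tau\downarrow 0$ (together with the reduction in the first paragraph) yields the desired bound.

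The main obstacle lies in the two amenable-group inputs on which the argument rests. The amenable Katok formula demands the Ornstein-Weiss quasi-tiling machinery to transfer covering information between F{\o}lner sets, and the growth condition $|F_n|/\log n\to\infty$ enters through a Borel-Cantelli argument upgrading convergence in measure to almost-sure control of the covering counts. Entropy density under weak specification is the second delicate point: one must glue finitely many orbit segments along translates of F{\o}lner sets that are sufficiently spread out in the amenable sense, and show the resulting ergodic approximations track both $\int\psi\,d\nu$ and $h_\nu(X,G)$ — here too the quasi-tiling technology is the crux.
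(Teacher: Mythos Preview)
Your route differs from the paper's. You factor through an entropy-density step (weak specification $\Rightarrow$ ergodic measures are entropy-dense in $M(X,G)$), after which the ergodic case is handled exactly as in Theorem~\ref{th-1}: a maximal $(F_n,2\epsilon)$-separated subset of the ergodic-theorem good set $G_n$ has cardinality at least $\exp(|F_n|(h_\nu-\delta))$ by Theorem~\ref{Katok}, the $(F_n,\epsilon)$-balls around its points are disjoint, sit inside $V_n$, and each carries $\mu$-mass $\ge C\exp(-|F_n|(\int\psi\,d\nu+\eta))$ from the $\mathcal{V}^-$ hypothesis; specification is not used at all in this step. The paper never reduces to ergodic $\nu$. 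It writes $\nu=\int\tau\,d\pi(\tau)$, partitions $E(X,G)$ into cells of small diameter with weights $a_i$ and near-optimal ergodic representatives $\lambda_i$, then $\epsilon$-quasi-tiles $F_n$ by earlier F{\o}lner sets via Proposition~\ref{prop-2-3}, allocates the tiles among the $\lambda_i$ in proportion to the $a_i$, and on each tile selects orbit segments from $(F_{n_j},4\epsilon)$-separated subsets of the good sets for the corresponding $\lambda_i$; weak specification is invoked once to glue these segments into a single shadowing point, and the resulting $(F_n,\epsilon)$-balls are disjoint and contained in $V_n$. Your organisation is cleaner and isolates specification to a single modular lemma, but the price is that entropy density for general amenable $G$ under weak specification is not a citable off-the-shelf result --- your last paragraph concedes as much --- and establishing it requires precisely the quasi-tiling-plus-gluing construction the paper carries out directly. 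The two approaches therefore perform essentially the same work, packaged differently.
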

We will give the definition of weak specification property in next section. We also remark that when $G=\Z$, the weak specification property here coincides with
the specification property in \cite{Y}.

The above theorems are the amenable group action version of Lai-Sang Young's classical results in \cite{L}. But for the proof, we need to handle the techniques for amenable group actions.
For the proof of Theorem \ref{th-1}, we need to prove an amenable group version of Katok's entropy formula \cite{K}. For the proof of Theorem \ref{th-2}, we need to employ the idea and technique of
the proof of variational principle for amenable pressure (see for example, \cite{O}). For the proof of Theorem \ref{th-3}, we need a lemma via the quasi tiling of amenable groups.
We also should remark here that the referee mentioned to us the recent advances on the tilings of amenable groups by Downarowicz etc \cite{DHZ}. It says that each amenable group can be tiled by finite many shapes. Compare with the quasi tiling theory, this result can provide a better and simplified understanding on the structure of amenable groups.

\section{Preliminary}
\subsection{Entropy}\

Let $(X,G,\mu)$ be a measure-theoretic $G-$action dynamical system where $G$ is a discrete countable amenable group and $\mu$ is a $G-$invariant Borel probability measure on $X$.
Let $\mathcal {P}$  and $\mathcal {Q}$ be two finite measurable partitions of $X$ and denote by $\mathcal{P}\vee \mathcal{Q}=\{P\cap Q: P\in\mathcal{Q}\text{ and }Q\in\mathcal{Q}\}$.
For a finite subset $F$ in $G$, we denote by $\mathcal{P}_F=\bigvee_{g\in F}g^{-1}\mathcal{P}$.
Then the classical measure-theoretical entropy of $\mathcal {P}$ is defined by
$$h_{\mu}(G,\mathcal{P})=\liminf_{n\rightarrow+\infty}\frac{1}{|F_n|}H(\mathcal{P}_{F_n}),$$
where $\{F_n\}$ is any F{\o}lner sequence in $G$ and the definition is independent of the specific F{\o}lner sequence $\{F_n\}$.
The {\it measure-theoretical entropy} of the system $(X,G,\mu)$, $h_{\mu}(X,G)$, is the supremum of $h_{\mu}(G,\mathcal{P})$ over $\mathcal {P}$.

Let's recall the classical Shannon-McMillan-Breiman theorem for amenable group actions.
\begin{theorem}[Shannon-McMillan-Breiman(SMB) theorem, \cite{L,W}]
Let $(X,G,\mu)$ be an ergodic $G-$system. For any tempered F{\o}lner sequence $\{F_n\}$ in $G$ with $\lim\limits_{n\rightarrow+\infty}\frac{|F_n|}{\log n}=\infty$
and any finite measurable partition $\mathcal{P}$ of $X$,
$$\lim_{n\rightarrow+\infty}-\frac{1}{|F_n|}\log \mu(\mathcal{P}_{F_n}(x))=h_{\mu}(G,\mathcal{P}),$$
for $\mu$ almost every $x\in X$, where $\mathcal {P}(x)$ denotes the atom in $\mathcal{P}$ that contains $x$.
\end{theorem}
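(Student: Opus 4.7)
The plan is to follow the Ornstein-Weiss / Lindenstrauss strategy for the amenable SMB theorem. Set $f_n(x) := -\frac{1}{|F_n|}\log \mu(\mathcal{P}_{F_n}(x))$; I will first establish $L^1$ convergence $f_n \to h_\mu(G,\mathcal{P})$, and then upgrade to almost-sure convergence via separate upper and lower bounds.

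For the $L^1$ step, subadditivity $H_\mu(\mathcal{P}_{F \cup F'}) \leq H_\mu(\mathcal{P}_F) + H_\mu(\mathcal{P}_{F'})$ combined with the F{\o}lner property yields $\tfrac{1}{|F_n|}H_\mu(\mathcal{P}_{F_n}) \to h_\mu(G,\mathcal{P})$, so $\int f_n\,d\mu$ converges to the right constant. Uniform integrability of $\{f_n\}$ is standard (at most $e^{t|F_n|}$ atoms of $\mathcal{P}_{F_n}$ can have $\mu$-measure exceeding $e^{-t|F_n|}$), and ergodicity then forces the $L^1$ limit to be the constant $h_\mu(G,\mathcal{P})$.

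The almost-sure upper bound is the easier half. Fix $\epsilon > 0$: the set $E_n := \{f_n > h_\mu(G,\mathcal{P})+\epsilon\}$ is a union of atoms of $\mathcal{P}_{F_n}$ of $\mu$-mass at most $e^{-(h+\epsilon)|F_n|}$, and its atom count is bounded by $e^{(h+\epsilon/2)|F_n|}$ for large $n$; hence $\mu(E_n) \leq e^{-\epsilon|F_n|/2}$. The growth hypothesis $|F_n|/\log n \to \infty$ makes this summable, and Borel-Cantelli gives $\limsup_n f_n \leq h_\mu(G,\mathcal{P}) + \epsilon$ almost surely. The almost-sure lower bound is the main difficulty, and here I would deploy the Ornstein-Weiss quasi-tiling machinery: given $\epsilon > 0$, choose a large index $K$ so that $\|f_K - h_\mu(G,\mathcal{P})\|_{L^1} < \epsilon$; for each large $n$, the $\epsilon$-quasi-tiling theorem produces essentially disjoint translates $\{g_j F_K\}_{j=1}^{m_n}$ covering all but an $\epsilon$-fraction of $F_n$. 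Refining $\mathcal{P}_{F_n}$ by the corresponding translates of $\mathcal{P}_{F_K}$ yields $f_n(x) \geq \sum_j \tfrac{|F_K|}{|F_n|} f_K(g_j x) - O(\epsilon)$, and applying the amenable pointwise ergodic theorem (already cited in the excerpt) to $f_K$ along the translation centers $\{g_j\}$ gives $\liminf_n f_n \geq h_\mu(G,\mathcal{P}) - O(\epsilon)$ almost surely.

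The main obstacle is precisely the quasi-tiling step: one must verify that the translation centers form a sequence regular enough to support the amenable pointwise ergodic theorem, and that the contribution of the uncovered remainder cells to $-\log \mu(\mathcal{P}_{F_n}(x))$ is uniformly $O(\epsilon|F_n|)$. The temperedness of $\{F_n\}$, via Shulman's bound and Lindenstrauss' maximal inequality, is exactly what makes the pointwise ergodic theorem applicable to the merely measurable function $f_K$, while the growth condition $|F_n|/\log n \to \infty$ is what feeds the Borel-Cantelli step in the upper bound.
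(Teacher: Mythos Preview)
The paper does not supply its own proof of this statement: the SMB theorem is quoted verbatim as a known result with the attribution \cite{L,W} (Lindenstrauss and Weiss) and is used as a black box in the subsequent arguments (notably in the proof of Theorem~\ref{Katok}). There is therefore nothing in the paper to compare your proposal against.

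That said, your outline is a faithful high-level description of the Lindenstrauss strategy, which is precisely one of the two cited sources. One caution on your lower-bound sketch: the phrase ``applying the amenable pointwise ergodic theorem to $f_K$ along the translation centers $\{g_j\}$'' is not quite how the argument runs, since the centers $\{g_j\}$ for a fixed $n$ are just a finite set, not a F{\o}lner sequence. What actually happens is that the sum $\sum_j \tfrac{|F_K|}{|F_n|} f_K(g_j x)$ is recognised (up to $O(\epsilon)$ error from overlaps and the uncovered remainder) as an ergodic average $\tfrac{1}{|F_n|}\sum_{g\in F_n} f_K(gx)$ of the single $L^1$ function $f_K$ along the original tempered F{\o}lner sequence $\{F_n\}$, and it is to \emph{that} average that Lindenstrauss' pointwise theorem applies. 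Your closing paragraph already flags this step as the crux, which is accurate; just be aware that the resolution is a rewriting of the tiled sum as a genuine $F_n$-average, not a separate ergodic theorem along the centers.
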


Suggested by the SMB theorem, we may define the entropy at $x\in X$ with respect to $\mu\in M(X)$ for $\{F_n\}$ by
\begin{align*}
  h_{\mu}(\{F_n\},x)=\lim_{\epsilon\rightarrow 0}\limsup_{n\rightarrow+\infty}-\frac{1}{|F_n|}\log \mu(B_{F_n}(x,\epsilon)).
\end{align*}
In \cite{ZC}, the authors showed that $h_{\mu}(\{F_n\},x)=h_{\mu}(X,G)$ for $\mu$-a.e. $x\in X$ whence $\mu\in E(X,G)$ and
the F{\o}lner sequence $\{F_n\}$ is tempered and satisfies $\lim\limits_{n\rightarrow+\infty}\frac{|F_n|}{\log n}=\infty$.

For $\nu\in M(X)$, we define the $\nu-$relative entropy with respect to $\mu$ for F{\o}lner sequence $\{F_n\}$ by
\begin{align*}
  h_{\mu}(\{F_n\};\nu)=\nu-{\rm ess} \sup h_{\mu}(\{F_n\},x).
\end{align*}

Now we return to the topological case. Let $(X,G)$ be a compact metric $G-$action topological dynamical system and $G$ a discrete countable amenable group.
Let $\{F_n\}$ be a F{\o}lner sequence. Then the topological entropy of $(X,G)$ is defined in the following way.

Let $\mathcal {U}$ and $\mathcal{V}$ be two open covers of $X$ and denote by $\mathcal{U}\vee \mathcal{V}=\{U\cap V: U\in\mathcal{U}\text{ and }V\in\mathcal{V}\}$.
For a finite subset $F$ in $G$, we denote by $\mathcal{U}_{F}=\bigvee_{g\in F}g^{-1}\mathcal{U}$. Let $N(\mathcal{U})$ denote the number of sets in a finite subcover
 of $\mathcal{U}$ with smallest cardinality. Then the topological entropy of $\mathcal{U}$ is
$$h_{top}(G,\mathcal {U})=\lim_{n\rightarrow+\infty}\frac{1}{|F_n|}\log N\big(\mathcal{U}_{F_n}\big).$$
It is shown that $h_{top}(G,\mathcal {U})$ is not dependent on the choice of the F{\o}lner sequences $\{F_n\}$.
And the {\it topological entropy} of $(X,G)$ is
$$h_{top}(X,G)=\sup_{\mathcal U} h_{top}(G,\mathcal {U}),$$
where the supremum is taken over all the open covers of $X$.

\subsection{Weak specification}\

The following definition generalizes the traditional specification property to  general group actions(see \cite{CL}).

Let $(X,G)$ be a compact metric $G-$action topological dynamical system and $G$ a discrete countable group (need not be amenable).
$(X,G)$ has {\it weak specification} if for any $\epsilon>0$ there exists a nonempty finite subset $F$ of $G$
with the following property: for any finite collection $F_1, \cdots, F_m$ of finite subsets $G$ with
$$FF_i\cap F_j=\emptyset \text{ for } 1\le i,j\le m,i\neq j,$$
and for any collection of points $x_1,\cdots,x_m \in X$, there exists a point $y\in X$ such that
$$d(gx_i,gy)\le \epsilon \text{ for all } g\in F_i, 1\le i\le m.$$

\subsection{Quasi tiling}\

The quasi tiling theory for amenable groups, set up by Ornstein and Weiss(see \cite{OW}), is a useful tool and technique in the study of dynamical systems for amenable group actions.
Let $F(G)$ denote the collection of finite subsets of $G$. Let $A,K\subset F(G)$ and $\delta>0$, the set $A$ is {\it $(K, \delta)$-invariant} if
\begin{align*}
  \frac{|B(A,K)|}{|A|}<\delta,
\end{align*}
where $B(A,K)=\{g\in G: Kg\cap A\neq\emptyset \text { and } Kg\cap(G\setminus A)\neq\emptyset\}$ is the {\it $K$-boundary} of $A$.
Let $\{A_1,A_2,\cdots,A_k\}\subset F(G)$ and $\epsilon\in(0,1)$. Subsets
$A_1,A_2,\cdots,A_k$ are {\it $\epsilon$-disjoint} if there are $\{B_1,B_2,\cdots,B_k\}\subset F(G)$ such that
\begin{enumerate}
  \item $B_i\subset A_i$ and $\frac{|B_i|}{|A_i|}>1-\epsilon$ for each $i$,
  \item $B_i$'s are mutually disjoint.
\end{enumerate}
For $\alpha\in(0,1]$, we say that $\{A_1,A_2,\cdots,A_k\}$ {\it $\alpha$-covers} $A\in F(G)$ if
\begin{align*}
  \frac{|\cup_{i=1}^kA_i\cap A|}{|A|}\ge\alpha.
\end{align*}
We say that $A_1,A_2,\cdots,A_k$ {\it $\epsilon$-quasi-tile} $A\in F(G)$ if there exists $\{C_1,C_2,\cdots,C_k\}\subset F(G)$ such that
\begin{enumerate}
\item for each $i$, $A_iC_i\subset A$ and $A_ic$'s for $c\in C_i$ are $\epsilon$-disjoint,
\item for $i=1,2,\cdots,k$, $A_iC_i$'s are mutually disjoint,
\item $\{A_1C_1,A_2C_2,\cdots,A_kC_k\}$ $(1-\epsilon)$-covers $A$.
\end{enumerate}

The following theorem states a fundamental quasi tiling property of amenable groups.

\begin{theorem}[Theorem 2.4 of \cite{WZ}]
Let $G$ be an amenable group and $\{e\}\subset F_1\subset F_2\subset\cdots$ be a F{\o}lner sequence in $G$. Then for
any $0<\epsilon<\frac{1}{4}$ and any integer $N>0$, there exist integers $N\le n_1<n_2<\cdots<n_k$ such that any $F_M$ (M sufficiently large)
can be $\epsilon$-quasi-tiled by $F_{n_1},F_{n_2},\cdots, F_{n_k}$.
\end{theorem}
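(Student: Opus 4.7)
The plan is to implement the classical Ornstein--Weiss greedy construction. I would choose the scales $n_1<n_2<\cdots<n_k$ recursively so that each $F_{n_{i+1}}$ is much more invariant than $F_{n_i}$, and then build the tile--centre sets $C_1,\ldots,C_k$ by packing from the largest scale downwards. Concretely, fix a decreasing sequence of invariance parameters $\delta_1>\delta_2>\cdots>0$, chosen much smaller than $\epsilon$, pick $n_1\ge N$ arbitrarily, and recursively pick $n_{i+1}>n_i$ so that $F_{n_{i+1}}$ is $(F_{n_i},\delta_{i+1})$-invariant, which is possible because $\{F_n\}$ is F\o lner. The chain length $k$ is chosen with $(1-\epsilon/2)^k<\epsilon$. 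Finally, require $M$ large enough that $F_M$ itself is $(F_{n_k},\delta)$-invariant for some small $\delta$ fixed later.

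Next, I would construct the centres inductively from the top. Set $A_k=F_M$, and let $C_k\subset G$ be maximal subject to $F_{n_k}c\subset A_k$ for every $c\in C_k$ and $\{F_{n_k}c:c\in C_k\}$ being $\epsilon$-disjoint in the sense of the definition in the excerpt. Define $A_{k-1}=A_k\setminus F_{n_k}C_k$, and repeat on $A_{k-1}$ with the F\o lner set $F_{n_{k-1}}$, requiring in addition that the new translates stay disjoint from $F_{n_k}C_k$. Continue down to $C_1$.

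The crucial estimate is $|A_{i-1}|\le(1-\epsilon/2)|A_i|$ whenever $|A_i|>\epsilon|F_M|$. The nested F\o lner invariance of $F_{n_{i+1}},\ldots,F_{n_k}$ relative to $F_{n_i}$, together with the inclusion conditions already imposed, forces $A_i$ to be $(F_{n_i},\delta')$-invariant for some tiny $\delta'$, so most $c\in A_i$ satisfy $F_{n_i}c\subset A_i$. A Vitali--type selection on this set of admissible centres then shows that either one can adjoin another translate to $C_i$ while preserving $\epsilon$-disjointness (contradicting maximality of $C_i$), or else the translates already chosen cover at least an $(\epsilon/2)$-fraction of $A_i$. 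Iterating $i=k,k-1,\ldots,1$ gives $|A_0|<\epsilon|F_M|$, which together with the built-in $\epsilon$-disjointness and the inclusion $F_{n_i}C_i\subset F_M$ is exactly the $\epsilon$-quasi-tiling.

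The main obstacle I expect is the bookkeeping in the greedy/Vitali step: one must track disjoint witness sets $B_c\subset F_{n_i}c$ of relative size $>1-\epsilon$ simultaneously across all scales, and verify that the cumulative boundary losses remain controlled by a single small $\delta'$. The precise balancing between the invariance parameters $\delta_i$ and $\epsilon$ is the technical core of the Ornstein--Weiss quasi-tiling theorem, and it is exactly what the recursive choice of the $n_i$ is designed to make automatic.
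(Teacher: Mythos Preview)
The paper does not actually prove this statement: it is quoted verbatim as Theorem~2.4 of Ward--Zhang \cite{WZ} and used as a black box, with only the parameter choices $(1-\frac{\epsilon}{2})^k<\epsilon$, $6^k\delta<\frac{\epsilon}{2}$, and the recursive $(F_{n_i}F_{n_i}^{-1},\delta)$-invariance of $F_{n_{i+1}}$ recalled from that reference in order to derive the minor extension Proposition~\ref{prop-2-3}. Your outline is the standard Ornstein--Weiss top-down greedy packing, which is exactly the argument in the cited source and is consistent with the parameter choices the paper extracts; so your approach is correct and aligned with what the paper invokes, though there is no in-paper proof to compare against beyond those hints.
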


In our setting, we do not restrict the F{\o}lner sequence to be increasing and contain $e$. Let $\{F_n\}$ be a F{\o}lner sequence in $G$.
For any integer $N>0$, as in the proof of Theorem 2.4 of \cite{WZ}, choose $k>0$ and $\delta$ such that $(1-\frac{\epsilon}{2})^k<\epsilon$ and
$6^k\delta<\frac{\epsilon}{2}$. And choose $N\le n_1<n_2<\cdots<n_k$ such that $F_{n_{i+1}}$ is $(F_{n_i}F_{n_i}^{-1},\delta)$-invariant and $\frac{|F_{n_i}|}{|F_{n_{i+1}}|}<\delta$.
Then there exists some $g_i\in G$ such that $F_{n_i}g_i\subset F_{n_{i+1}}$ for each $1\le i\le k-1$. Take $g_k\in (g_1g_2\cdots g_{k-1})^{-1}F_{n_1}^{-1}$ and
set $\bar F_{n_i}=F_{n_i}g_ig_{i+1}\cdots g_k$ for each $1\le i\le k$. Then $\{e\}\subset \bar F{n_1}\subset \bar F_{n_2}\subset\cdots\subset\bar F_{n_k}$. Moreover,
$\bar F_{n_{i+1}}$ is $(\bar F_{n_i}\bar F_{n_i}^{-1},\delta)$-invariant and $\frac{|\bar F_{n_i}|}{|\bar F_{n_{i+1}}|}<\delta$. Then due to the proof of Theorem 2.4 of \cite{WZ},
any $(\bar F_{n_k}\bar F_{n_k}^{-1}, \delta)$-invariant finite subset $F$ with $\frac{|\bar F_{n_k}|}{|F|}<\delta$ can be $\epsilon$-quasi-tiled by
$\bar F_{n_1},\bar F_{n_2},\cdots, \bar F_{n_k}$. Since $\bar F_{n_i}$'s are translations of $F_{n_i}$'s, $F$ can also be $\epsilon$-quasi-tiled by
$F_{n_1},F_{n_2},\cdots, F_{n_k}$. Hence we get the following proposition.

\begin{proposition}\label{prop-2-3}
Let $G$ be an amenable group and $\{F_n\}$ be a F{\o}lner sequence in $G$. Then for
any $0<\epsilon<\frac{1}{4}$ and any integer $N>0$, there exist integers $N\le n_1<n_2<\cdots<n_k$ such that any $F_M$ (M sufficiently large)
can be $\epsilon$-quasi-tiled by $F_{n_1},F_{n_2},\cdots, F_{n_k}$.
\end{proposition}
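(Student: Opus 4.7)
The plan is to reduce to Theorem 2.4 of \cite{WZ}, which produces quasi-tilings but assumes a nested F\o lner sequence containing the identity $e$. Since our sequence $\{F_n\}$ need not be nested and need not contain $e$, the strategy is to right-translate carefully chosen terms $F_{n_i}$ so that the resulting sequence $\bar F_{n_i} = F_{n_i}s_i$ is nested and contains $e$, while preserving the size and invariance hypotheses needed to apply the WZ theorem; then at the end observe that a quasi-tiling transported by a right translation is still a quasi-tiling.

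Following the quantitative set-up in \cite{WZ}, I would first fix an integer $k$ with $(1-\epsilon/2)^k<\epsilon$ and a $\delta>0$ with $6^k\delta<\epsilon/2$, then use the F\o lner property to extract indices $N\le n_1<n_2<\cdots<n_k$ so that each $F_{n_{i+1}}$ is $(F_{n_i}F_{n_i}^{-1},\delta)$-invariant and $|F_{n_i}|/|F_{n_{i+1}}|<\delta$. The $(F_{n_i}F_{n_i}^{-1},\delta)$-invariance of $F_{n_{i+1}}$ guarantees that its $F_{n_i}F_{n_i}^{-1}$-interior is nonempty, so I can pick $g_i$ with $F_{n_i}g_i\subset F_{n_{i+1}}$ for $1\le i\le k-1$ and then choose $g_k\in(g_1\cdots g_{k-1})^{-1}F_{n_1}^{-1}$. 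Setting $s_i=g_ig_{i+1}\cdots g_k$ and $\bar F_{n_i}=F_{n_i}s_i$, one checks directly that $\{e\}\subset\bar F_{n_1}\subset\bar F_{n_2}\subset\cdots\subset\bar F_{n_k}$. Moreover, right translation preserves both cardinality ratios and the $(K,\delta)$-invariance property (since $B(Ag,K)=B(A,K)g$), so $\bar F_{n_{i+1}}$ remains $(\bar F_{n_i}\bar F_{n_i}^{-1},\delta)$-invariant with $|\bar F_{n_i}|/|\bar F_{n_{i+1}}|<\delta$.

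At this point Theorem 2.4 of \cite{WZ} applies to the nested sequence $\bar F_{n_1},\ldots,\bar F_{n_k}$: every sufficiently $(\bar F_{n_k}\bar F_{n_k}^{-1},\delta)$-invariant set $F$ (in particular $F_M$ for all large $M$, by the F\o lner property) is $\epsilon$-quasi-tiled by the $\bar F_{n_i}$. The final step is to transport this back to the original $F_{n_i}$: if $\{\bar F_{n_i}c : c\in C_i\}$ is a quasi-tiling of $F_M$, then setting $C_i'=s_iC_i$ gives $F_{n_i}C_i'=\bar F_{n_i}C_i$, so the three quasi-tiling conditions transfer verbatim because $\epsilon$-disjointness, mutual disjointness, and the $(1-\epsilon)$-cover condition are all cardinality statements invariant under this relabeling. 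The only step that is subtle (rather than routine) is the choice of $g_i$ in the second paragraph: one must be sure that the size bounds $|F_{n_i}|/|F_{n_{i+1}}|<\delta$ together with the chosen invariance really produce a $g_i$ with $F_{n_i}g_i\subset F_{n_{i+1}}$ and simultaneously allow $g_k$ to be chosen so that $e\in\bar F_{n_1}$, but this amounts to the same counting argument that underlies the construction in \cite{WZ} and presents no new difficulty.
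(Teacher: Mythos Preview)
Your proof is correct and follows essentially the same approach as the paper: choose $k$ and $\delta$ as in \cite{WZ}, select indices $n_1<\cdots<n_k$ with the required invariance and size conditions, right-translate by $g_ig_{i+1}\cdots g_k$ to obtain a nested chain $\{e\}\subset\bar F_{n_1}\subset\cdots\subset\bar F_{n_k}$, apply Theorem~2.4 of \cite{WZ}, and then pull the quasi-tiling back to the original $F_{n_i}$ by absorbing the translations into the tiling centers. Your write-up is in fact somewhat more explicit than the paper's in justifying why right translation preserves $(K,\delta)$-invariance and why the quasi-tiling conditions transfer along $C_i'=s_iC_i$.
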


\section{Katok's entropy formula and proof of Theorem \ref{th-1}}
In this section, we will prove Theorem \ref{th-1}. For this aim, we need the Katok's entropy formula \cite{K} for amenable group action dynamical systems.

\begin{theorem}\label{Katok}
Let $(X,G)$ be a compact metric $G-$action topological dynamical system and $G$ a discrete countable amenable group.
Let $\mu\in E(X,G$) and $\{F_n\}$ a tempered F{\o}lner sequence in $G$
with $\lim\limits_{n\rightarrow+\infty}\frac{|F_n|}{\log n}=\infty$, then
\begin{align*}
&\lim_{\epsilon\rightarrow 0}\liminf_{n\rightarrow+\infty}\frac{1}{|F_n|}\log N(F_n,\epsilon,\delta) \\
=&\lim_{\epsilon\rightarrow 0}\limsup_{n\rightarrow+\infty}\frac{1}{|F_n|}\log N(F_n,\epsilon,\delta)=h_{\mu}(X,G),
\end{align*}
where $N(F,\epsilon,\delta)$ is the minimal number of $B_{F}(x,\epsilon)$ balls that cover a set of measure no less than $1-\delta$.
\end{theorem}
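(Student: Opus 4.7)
The plan is to establish the formula by separately bracketing $\limsup_n \frac{1}{|F_n|}\log N(F_n,\epsilon,\delta)$ from above and $\liminf_n \frac{1}{|F_n|}\log N(F_n,\epsilon,\delta)$ from below by $h_\mu(X,G)$ as $\epsilon \to 0$, following Katok's original line of argument in the amenable setting. The two essential inputs, each already available under the standing hypotheses, are the amenable Shannon--McMillan--Breiman theorem and the pointwise ergodic theorem for tempered F{\o}lner sequences.

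For the upper bound, I would fix $\alpha > 0$ and choose any finite measurable partition $\mathcal{P}$ with $\mathrm{diam}(\mathcal{P}) < \epsilon$; then any two points of a common atom of $\mathcal{P}_{F_n}$ are $(F_n,\epsilon)$-close, so every atom is contained in a Bowen ball of radius $\epsilon$. SMB together with Egorov's theorem produces, for $n$ large, a set $Y_n$ with $\mu(Y_n) > 1 - \delta$ on which $\mu(\mathcal{P}_{F_n}(x)) \geq e^{-|F_n|(h_\mu(G,\mathcal{P})+\alpha)}$; by disjointness of atoms, at most $e^{|F_n|(h_\mu(G,\mathcal{P})+\alpha)}$ atoms meet $Y_n$, which yields a Bowen-ball cover of the same cardinality. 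Letting $\alpha \to 0$, using $h_\mu(G,\mathcal{P}) \leq h_\mu(X,G)$, and then $\epsilon \to 0$ delivers the upper bound.

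For the lower bound, fix $\alpha > 0$ and choose a partition $\mathcal{P}$ with $\mu(\partial \mathcal{P}) = 0$ and $h_\mu(G,\mathcal{P}) > h_\mu(X,G) - \alpha$; such partitions are standard on a compact metric space (approximate any entropy-realising partition by one built from balls of generic radii, using the Rokhlin distance). Set $d = |\mathcal{P}|$, pick $\eta > 0$ with $\eta \log d < \alpha$, and choose $\epsilon^* > 0$ small enough that the $2\epsilon^*$-neighborhood $U$ of $\partial\mathcal{P}$ has $\mu(U) < \eta/2$. For $\epsilon \leq \epsilon^*$, combining SMB with the amenable ergodic theorem applied to $\chi_U$ yields, for $n$ large, a set $Y_n$ with $\mu(Y_n) > 1 - \delta/2$ on which simultaneously
\[
\mu(\mathcal{P}_{F_n}(y)) \leq e^{-|F_n|(h_\mu(G,\mathcal{P})-\alpha)}
\quad\text{and}\quad
|\{g \in F_n : gy \in U\}| < \eta|F_n|.
\]
Given a minimal Bowen-ball cover realising $N(F_n,\epsilon,\delta)$, for any centre $x$ whose ball meets $Y_n$ choose $x' \in B_{F_n}(x,\epsilon)\cap Y_n$, so that $B_{F_n}(x,\epsilon) \subset B_{F_n}(x',2\epsilon)$ by the triangle inequality. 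The key combinatorial step is that for $y \in B_{F_n}(x',2\epsilon)$ and $g \in F_n$ with $gx' \notin U$ one has $\mathcal{P}(gy) = \mathcal{P}(gx')$; the second condition defining $Y_n$ then bounds the number of atoms of $\mathcal{P}_{F_n}$ meeting $B_{F_n}(x',2\epsilon)$ by $d^{\eta|F_n|}$, each of measure at most $e^{-|F_n|(h_\mu(G,\mathcal{P})-\alpha)}$ whenever it touches $Y_n$. Summing over centres gives
\[
1 - \delta \leq N(F_n,\epsilon,\delta)\,d^{\eta|F_n|}\,e^{-|F_n|(h_\mu(G,\mathcal{P})-\alpha)} + \delta/2,
\]
so $\liminf_n \frac{1}{|F_n|}\log N(F_n,\epsilon,\delta) \geq h_\mu(G,\mathcal{P}) - \alpha - \eta \log d \geq h_\mu(X,G) - 3\alpha$; sending $\alpha \to 0$ closes the lower bound.

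The main obstacle is the combinatorial control on how many atoms of $\mathcal{P}_{F_n}$ intersect a single Bowen ball, and it is precisely the pointwise ergodic theorem on tempered F{\o}lner sequences that makes this control work in the amenable setting (SMB additionally requires the growth condition $|F_n|/\log n \to \infty$). The remaining ingredients---arranging a partition with null boundary and large entropy, pairing the radii $\epsilon$ and $2\epsilon$ via the triangle inequality, and sending $\alpha, \eta, \epsilon$ to $0$ in the right order---are routine bookkeeping.
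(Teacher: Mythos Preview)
Your proposal is correct and follows essentially the same approach as the paper: both directions use the amenable SMB theorem, and the lower bound combines SMB with the pointwise ergodic theorem applied to the indicator of a neighborhood of $\partial\mathcal{P}$ to control how many $\mathcal{P}_{F_n}$-atoms a single Bowen ball can meet. The only cosmetic differences are that the paper bounds the atom count via a Hamming-ball Stirling estimate $L_n\le e^{\eta|F_n|}$ rather than your cruder (but equally sufficient) $d^{\eta|F_n|}$, and it lets $\mathrm{diam}(\xi)\to 0$ at the end instead of selecting a null-boundary partition with near-maximal entropy up front.
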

\begin{proof}
We first show that
\begin{align}\label{ineq-1}
\limsup_{n\rightarrow+\infty}\frac{1}{|F_n|}\log N(F_n,\epsilon,\delta)\le h_{\mu}(X,G),
\end{align}
for any $\epsilon>0$ and $0<\delta<1$.
Take a finite measurable partition of $X$ which we denote by $\xi$ such that $diam(\xi)<\epsilon$. Then it follows that each atom in $\xi_{F_n}$
must be contained in some $B_{F_n}(x,\epsilon)$. Let
$$A_{F_n,\epsilon,\gamma}=\{x\in X: \mu(\xi_{F_n}(x))>\exp(-|F_n|(h_{\mu}(G,\xi)+\gamma))\}.$$
Since $\mu$ is ergodic, by SMB Theorem, for any $\gamma>0$, $\mu(A_{F_n,\epsilon,\gamma})\rightarrow 1$ as $n$ goes to infinity.
Then for sufficiently large $n$, $\mu(A_{F_n,\epsilon,\gamma})$ is bigger than $1-\delta$. The set $A_{F_n,\epsilon,\gamma}$ contains
at most $\lceil\exp(|F_n|(h_{\mu}(G,\xi)+\gamma))\rceil$ many atoms of $\xi_{F_n}$ and then can be covered by the corresponding $B_{F_n}(x,\epsilon)$ balls.
This deduces that for any $\gamma>0$,
$$\limsup_{n\rightarrow+\infty}\frac{1}{|F_n|}\log N(F_n,\epsilon,\delta)\le h_{\mu}(G,\xi)+\gamma.$$
Letting $\gamma\rightarrow 0$, we get the inequality \eqref{ineq-1}.

In the following we will show that
\begin{align}\label{ineq-2}
\lim_{\epsilon\rightarrow 0}\liminf_{n\rightarrow+\infty}\frac{1}{|F_n|}\log N(F_n,\epsilon,\delta)\ge h_{\mu}(X,G).
\end{align}

Let $\xi$ be a finite measurable partition of $X$ such that $\mu(\partial \xi)=0$ and denote by $\tilde{h}=h_{\mu}(G,\xi)$. Then for
sufficiently small $\gamma>0$, the $\gamma-$neiborghhood of $\partial \xi$ (denoted by $U_{\gamma}$) has measure less than $\epsilon$.
By the ergodic theorem, $\frac{1}{|F_n|}\sum\limits_{g\in F_n}\chi_{U_{\gamma}}(gx)$ converges to $\mu(U_{\gamma})$ a.e..
For any $\epsilon>0$, notice that
\begin{align*}
&\{x\in X: \lim_{n\rightarrow+\infty}\frac{1}{|F_n|}\sum\limits_{g\in F_n}\chi_{U_{\gamma}}(gx)=\mu(U_{\gamma})\}\\
\subseteq &\{x\in X: \lim_{n\rightarrow+\infty}\frac{1}{|F_n|}\sum\limits_{g\in F_n}\chi_{U_{\gamma}}(gx)<\epsilon\}\\
\subseteq &\bigcup_{N\in \N}\{x\in X: \forall n>N, \frac{1}{|F_n|}\sum\limits_{g\in F_n}\chi_{U_{\gamma}}(gx)<\epsilon\}\\
\triangleq & \bigcup_{N\in \N} E_N
\end{align*}
 and $E_N$ increases. For sufficiently large $N$, whence $n>N$, we have
\begin{align}\label{set-1}
 \mu(\{x\in X: \forall n'\ge n, \sum\limits_{g\in F_{n'}}\chi_{U_{\gamma}}(gx)<\epsilon|F_{n'}|\})>1-\epsilon.
\end{align}

By SMB Theorem, $-\frac{1}{|F_n|}\log \mu(\xi_{F_n}(x))$ converges to $\tilde{h}$ a.e.. Hence by the same argument as above,
for sufficiently large $N$, whence $n>N$,
\begin{align}\label{set-2}
 \mu(\{x\in X: \forall n'\ge n, -\frac{1}{|F_{n'}|}\log \mu(\xi_{F_{n'}}(x))>\tilde{h}-\epsilon\})>1-\epsilon.
\end{align}

Let \begin{align}\label{set-3}
      E=&\{x\in X: \forall n'\ge n, \sum\limits_{g\in F_{n'}}\chi_{U_{\gamma}}(gx)<\epsilon|F_{n'}|\}\\
      &\bigcap\{x\in X: \forall n'\ge n,-\frac{1}{|F_{n'}|}\log \mu(\xi_{F_{n'}}(x))> \tilde{h}-\epsilon\}. \nonumber
    \end{align}
 Then for any $n>N$, $\mu(E)>1-2\epsilon$.

Let $w_{\xi,F_n}(x)=(\xi(gx))_{g\in F_{n}}$ be the $(\xi,F_n)-$name of $x$. For any $y\in B(x,\gamma)$, we have that either $\xi(x)=\xi(y)$ or $x\in U_{\gamma}(\xi)$.
Hence if $x\in E$ and $y\in B_{F_n}(x,\gamma)$, then the Hamming distance between $w_{\xi,F_n}(x)$ and $w_{\xi,F_n}(y)$ is less than $\epsilon$. This implies that
whence $x\in E$, $$B_{F_n}(x,\gamma)\subseteq \bigcup \{\xi_{F_n}(y): w_{\xi,F_n}(y) \text{ is }\epsilon-\text{close to }w_{\xi,F_n}(x)\text{ under Hamming metric}\}.$$

Denote the total number of such $(\xi,F_n)-$names by $L_n$, then when $n$ is large enough,
\begin{align*}
  L_n\le\sum_{j=0}^{\lfloor\epsilon|F_n|\rfloor}\binom{|F_n|}{j}(\#\xi-1)^j\le \epsilon|F_n|\binom{|F_n|}{\lfloor\epsilon|F_n|\rfloor}(\#\xi-1)^{\epsilon|F_n|}.
\end{align*}
For an upper bound of $L_n$, one may refer to \cite{K} or \cite{BK}. Here we will give a brief estimate.
Applying the Stirling formula
\begin{align*}
  n!=\sqrt{2\pi n}(\frac{n}{e})^ne^{\alpha_n}, \frac{1}{12n+1}<\alpha_n<\frac{1}{12n},
\end{align*}
$L_n$ can be estimated by:
\begin{align*}
 L_n&\le\epsilon|F_n|(\#\xi-1)^{\epsilon|F_n|}\cdot\frac{\sqrt{2\pi |F_n|}e^{\alpha_{|F_n|}-\alpha_{|F_n|-\lfloor\epsilon|F_n|\rfloor}-\alpha_{\lfloor\epsilon|F_n|\rfloor}}}{\sqrt{2\pi (|F_n|-\lfloor\epsilon|F_n|\rfloor)}\sqrt{2\pi \lfloor\epsilon|F_n|\rfloor}}\cdot\\ &\ \ \ \ \ \ \ \ (1-\frac{\lfloor\epsilon|F_n|\rfloor}{|F_n|})^{-(|F_n|-\lfloor\epsilon|F_n|\rfloor)}(\frac{\lfloor\epsilon|F_n|\rfloor}{|F_n|})^{-\lfloor\epsilon|F_n|\rfloor}\\
 &\le \exp(\epsilon|F_n|)\cdot \exp(\epsilon|F_n|\log(\#\xi-1))\cdot \exp(-\epsilon|F_n|\log\epsilon-(1-\epsilon)|F_n|\log(1-\epsilon))\\
 &=\exp(\eta|F_n|),
\end{align*}
where
$$\eta=\epsilon+\epsilon\log(\#\xi-1)-\epsilon\log\epsilon-(1-\epsilon)\log(1-\epsilon).$$
We note that $\eta>0$ is a constant only dependent on $\#\xi$ and $\epsilon$ but independent of $x$ and $n$ and moreover, $\eta$ tends to $0$ as $\epsilon$ tends to $0$.

Let $D$ be a subset of $X$ with measure no less than $1-\delta$. If we let $\epsilon<\frac{1-\delta}{4}$, then $\mu(D\cap E)>\frac{1-\delta}{2}$.
We will estimate a lower bound for the number of $B_{F_n}(x,\gamma)$ balls needed to cover the set $D\cap E$. Suppose $D\cap E$ is covered by the family of
balls $\{B_{F_n}(y,\gamma): y\in S\}$.
From the discussion above, the intersection of $D\cap E$ with each ball in this family can be covered by no more than $L_n$ many atoms of $\xi_{F_n}$ and any atom of $\xi_{F_n}$ that intersects $D\cap E$ has measure at most $\exp(-|F_n|(\tilde{h}-\epsilon))$. Hence
$$\#S>\frac{\frac{1-\delta}{4}}{L_n\cdot\exp(-|F_n|(\tilde{h}-\epsilon))}\ge\exp(|F_n|(\tilde{h}-\eta-\epsilon))\cdot \frac{1-\delta}{4},$$
which allows us to deduce that
$$\liminf_{n\rightarrow+\infty}\frac{1}{|F_n|}\log N(F_n,\gamma,\delta)\ge \tilde{h}-\eta-\epsilon.$$
Letting $\epsilon$ goes to $0$ and then taking a sequence of $\xi$'s whose diameters tend to $0$, we obtain \eqref{ineq-2}.

\end{proof}

\begin{proof}[Proof of Theorem \ref{th-1}] \

  Let $\nu\in E(X,G)$ with $\int\varphi \, d\nu>c$. Since $h_{\nu}(X,G)\le h_{top}(X,G)<\infty$, the theorem holds obviously for the case that $h_{\mu}(G;\nu)=\infty$.
  Now we treat for the case $h_{\mu}(G;\nu)<\infty$.

  Denote by $V_{n}=\{x\in X: A_{F_n}\varphi(x)>c\}$ and let $\delta=\frac{1}{2}(\int\varphi \, d\nu-c)$.
  Choose $\epsilon_0>0$ such that $|\varphi(x)-\varphi(y)|<\delta$, whenever $d(x,y)<\epsilon_0$. Hence if $A_{F_n}\varphi(x)>\int\varphi \, d\nu-\delta=c+\delta$, then
  $B_{F_n}(x,\epsilon_0)\subseteq V_{n}$.

  For any $\gamma>0$, by Theorem \ref{Katok}, we can choose $0<\epsilon_1\le\epsilon_0$ such that for any $0<\epsilon<\epsilon_1$,
  \begin{align*}
    \liminf_{n\rightarrow\infty}\frac{1}{|F_n|}\log N(F_n,4\epsilon, \frac{1}{2})\ge h_{\nu}(X,G)-\gamma.
  \end{align*}
  And by the definition of $h_{\mu}(G;\nu)$, we can choose $0<\epsilon_2\le\epsilon_1$ such that
  \begin{align*}
    \nu(\{x\in X:\limsup_{n\rightarrow\infty}-\frac{1}{|F_n|}\log \mu(B_{F_n}(x,\epsilon_2))\le h_{\mu}(G;\nu)+\gamma\})>1-\frac{1}{10}.
  \end{align*}
  Denote by $E=\{x\in X:\limsup_{n\rightarrow\infty}-\frac{1}{|F_n|}\log \mu(B_{F_n}(x,\epsilon_2))\le h_{\mu}(G;\nu)+\gamma\}$. Then there exists $N\in\N$,
  such that $$\nu(\{x\in E: \mu(B_{F_n}(x,\epsilon_2))\ge \exp(-(h_{\mu}(G;\nu)+2\gamma)|F_n|), \forall n\ge N\})>1-\frac{1}{5}$$
  and $$\nu(\{x\in X: A_{F_n}\varphi(x)>\int\varphi \, d\nu-\delta, \forall n\ge N\}>1-\frac{1}{5}.$$
  Let $\Gamma$ be the intersection of the above two sets. Then $\nu(\Gamma)>1-\frac{2}{5}>\frac{1}{2}$.

  For each $n\ge N$, let $E_n$ be a maximal set of $(F_n, 2\epsilon_2)-$separated points in $\Gamma$, i.e. for any two different points $x,y\in E$, $d_{F_n}(x,y)\ge 2\epsilon_2$.
  By the maximality of $E_n$, $\Gamma\subseteq\bigcup_{x\in E_n}B_{F_n}(x,4\epsilon_2)$. Hence $\#E_n\ge N(F_n,4\epsilon_2,\frac{1}{2})$.
  Moreover, the Bowen balls $B_{F_n}(x,\epsilon_2)$, $x\in E_n$, are mutually disjoint. Noticing that for $x\in E_n\subseteq \Gamma$,
  $B_{F_n}(x,\epsilon_2)\subseteq V_{n}$, we have
  $$\mu(V_{n})\ge \sum_{x\in E_n}\mu(B_{F_n}(x,\epsilon_2))\ge \#E_n\exp(-(h_{\mu}(G;\nu)+2\gamma)|F_n|).$$
  Hence
  $$\liminf_{n\rightarrow\infty}\frac{1}{|F_n|}\log \mu(V_{n})\ge \liminf_{n\rightarrow\infty}\frac{1}{|F_n|}\log N(F_n,4\epsilon_2,\frac{1}{2})-(h_{\mu}(G;\nu)+2\gamma.$$
  Letting $\gamma$ and $\epsilon_2$ tend to $0$, we finish the proof of Theorem \ref{th-1}.
\end{proof}

\section{Proof of Theorem \ref{th-2}}

\begin{proof}[Proof of Theorem \ref{th-2}]\

  Let $\psi\in\mathcal{V}^+$ and denote $V_n=\{x\in X: A_{F_n}\varphi(x)\ge c\}$. We will show that there exists $\nu\in M(X,G)$ with $\int\varphi \, d\nu\ge c$
  such that
    \begin{align*}
      \limsup_{n\rightarrow\infty}\frac{1}{|F_n|}\log \mu(V_n)\le h_{\nu}(X,G)-\int\psi \, d\nu.
    \end{align*}
    By the definition of $\mathcal{V}^+$, we may let $C$ and $\epsilon$ be such that for any $x\in X$ and $n\in\N$, $\mu(B_{F_n}(x,\epsilon))\le C\exp(-S_{F_n}\psi(x))$.
    For each $n$, let $E_n$ be a maximal $(F_n,\epsilon)$-separated set contained in $V_n$.

    Let $\sigma_{n}\in M(X)$ be the atomic measure concentrated on $E_n$ by the formula
    \begin{align*}
      \sigma_{n}=\frac{\sum_{x\in E_n}e^{-S_{F_n}\psi(x)}\delta_x}{\sum_{z\in E_n}e^{-S_{F_n}\psi(z)}}.
    \end{align*}
    Define $\nu_n\in M(X)$ by
    \begin{align*}
      \nu_{n}=\frac{1}{|F_n|}\sum_{g\in F_n}\sigma_n\circ g^{-1}
    \end{align*}
    We note here that $\int S_{F_n}\psi \, d\sigma_n=\int\psi \, d\nu_n$.

    For convenience we denote $Z_n=\sum_{z\in E_n}e^{-S_{F_n}\psi(z)}$. Assume that the upper limit of $\frac{1}{|F_n|}\log Z_n$
    is attained along a sequence $\{n_j\}$, i.e.
    $$\limsup_{n\rightarrow\infty}\frac{1}{|F_n|}\log Z_n=\lim_{j\rightarrow\infty}\frac{1}{|F_{n_j}|}\log Z_{n_j}.$$
    Let $\nu$ be a weak limit of $\nu_{n_j}$. We may assume $\nu_{n_j}\rightarrow\nu$ by taking a subsequence and still denote this subsequence by $\{n_j\}$.
    Then $\nu\in M(X,G)$.

    Also notice that
    \begin{align*}
      \int \psi \, d\nu_n&=\int \psi \, d\frac{1}{|F_n|}\sum_{g\in F_n}\sigma_n\circ g^{-1}\\
      &=\int \psi \, d\frac{1}{Z_n}\sum_{x\in E_n}e^{-S_{F_n}\psi(x)}\frac{1}{|F_n|}\sum_{g\in F_n}\delta_{gx}\\
      &=\frac{1}{Z_n}\sum_{x\in E_n}e^{-S_{F_n}\psi(x)}A_{F_n}\psi(x)\ge c,
    \end{align*}
    we have that $\int\psi \, d\nu\ge c$.

    Let $\beta$ be a finite Borel partition of $X$ each element of which has diameter less than $\epsilon$. Moreover, we require that $\nu(\partial \beta)=0$.
    Recall that for any finite subset $F$ of $G$, $\beta_{F}=\bigvee_{g\in F}g^{-1}\beta$.
    Then each element of $\beta_{F_n}$ contains at most one point in $E_n$. Hence
    \begin{align*}
      &H_{\sigma_n}(\beta_{F_n})-\int S_{F_n}\psi \, d\sigma_n\\
      &=\sum_{y\in E_n}-\sigma_n(\{y\})\log \sigma_n(\{y\})-\sum_{y\in E_n}S_{F_n}\psi(y)\sigma_n(\{y\})\\
      &=\sum_{y\in E_n}\sigma_n(\{y\})(-S_{F_n}\psi(y)-\log \sigma_n(\{y\}))\\
      &=\sum_{y\in E_n}\sigma_n(\{y\})(-S_{F_n}\psi(y)-\log \frac{e^{-S_{F_n}\psi(y)}}{Z_n})\\
      &=\log Z_n.
    \end{align*}
    By Lemma 3.1 (3) of \cite{HYZ}, the multi-subadditivity of $H_{\sigma_n}(\beta_\bullet)$, we have:
    \begin{align*}
      H_{\sigma_n}(\beta_{F_n})
      &\le \frac{1}{|F|}\sum_{g\in F_n}H_{\sigma_n\circ g^{-1}}(\beta_{F})+|F^{-1}F_n\setminus F_n|\log\#\beta.
    \end{align*}
    Hence
    \begin{align*}
      &\;\;\;\;\frac{1}{|F_n|}H_{\sigma_n}(\beta_{F_n})\\
      &\le \frac{1}{|F|}\frac{1}{|F_n|}\sum_{g\in F_n}H_{\sigma_n\circ g^{-1}}(\beta_{F})+\frac{1}{|F_n|}|F^{-1}F_n\setminus F_n|\log\#\beta\\
      &\le \frac{1}{|F|}H_{\nu_n}(\beta_F)+\frac{|F^{-1}F_n\setminus F_n|}{|F_n|}\log\#\beta.
    \end{align*}
    Because $\nu(\partial \beta_F)=0$, we have that
    \begin{align*}
      \limsup_{n\rightarrow\infty}\frac{1}{|F_n|}\log Z_n&=\lim_{j\rightarrow\infty}\frac{1}{|F_{n_j}|}\log Z_{n_j}\\
      &=\lim_{j\rightarrow\infty} \frac{1}{|F_{n_j}|}\big(H_{\sigma_{n_j}}(\beta_{F_{n_j}})-\int S_{F_{n_j}}\psi \, d\sigma_{n_j}\big)\\
      &\le \lim_{j\rightarrow\infty}\big(\frac{1}{|F|}H_{\nu_{n_j}}(\beta_F)+\frac{|F^{-1}F_{n_j}\setminus F_{n_j}|}{|F_{n_j}|}\log\#\beta\big )
      -\lim_{j\rightarrow\infty}\int\psi \, d\nu_{n_j}\\
      &= \frac{1}{|F|}H_{\nu}(\beta_F)-\int\psi \, d\nu,
    \end{align*}
    which allows us to deduce that
    \begin{align*}
      \limsup_{n\rightarrow\infty}\frac{1}{|F_n|}\log Z_n\le h_{\nu}(X,G)-\int\psi \, d\nu
    \end{align*}
    by taking $F$ over all the finite subsets of $G$.

    Since $V_n\subseteq \bigcup_{x\in E_n}B_{F_n}(x,\epsilon)$, we have
    \begin{align*}
     \limsup_{n\rightarrow\infty}\frac{1}{|F_n|}\log \mu(V_n)
     &\le \limsup_{n\rightarrow\infty}\frac{1}{|F_n|}\log \sum_{x\in E_n}\mu(B_{F_n}(x,\epsilon))\\
     &\le \limsup_{n\rightarrow\infty}\frac{1}{|F_n|}\log \sum_{x\in E_n}C\exp(-S_{F_n}\psi(x))\\
     &= \limsup_{n\rightarrow\infty}\frac{1}{|F_n|}\log Z_n\\
     &\le h_{\nu}(X,G)-\int\psi \, d\nu.
    \end{align*}

\end{proof}

\section{Proof of Theorem \ref{th-3}}
\begin{proof}[Proof of Theorem \ref{th-3}]\

  Let $\psi\in\mathcal{V}^-$ and $V_n=\{x\in X: A_{F_n}\varphi(x)> c\}$.
  For any $\nu\in M(X,G)$ with $\int \phi \, d\nu>c$,
  let $\delta=\frac{1}{7}(\int \phi \, d\nu-c)$ and let $0<\gamma< \frac{\delta}{12}$. Then pick $\epsilon_0>0$ such that for any $\tau_1,\tau_2\in M(X,G)$, it holds that
  $$|\int \phi \, d\tau_1-\int\phi \, d\tau_2|<\delta \text{ and }|\int \psi \, d\tau_1-\int\psi \, d\tau_2|<\gamma,$$
  whenever $d_{M(X)}(\tau_1,\tau_2)\le \epsilon_0$, where $d_{M(X)}$ is a metric compatible with the weak topology of $M(X)$.

  Now let $\alpha=\{A_1,A_2,\cdots,A_k\}$ be a partition of $M(X,G)$ with $diam(\alpha)\le\epsilon_0$.
  Let $\nu=\int_{E(X,G)}\tau \, d\pi(\tau)$ be the ergodic decomposition of $\nu$, where $\pi$ is the corresponding Borel probability measure on $E(X,G)$.
  Hence $\int f \, d\nu=\int_{E(X,G)}(\int f \, d\tau)\, d\pi(\tau)$ for any $f\in C(X,\R)$. Let $a_i=\pi(A_i)$ and choose $\lambda_i\in A_i\cap E(X,G)$
  such that $h_{\lambda_i}(X,G)\ge h_{\tau}(X,G)-\gamma$ for $\pi-$a.e. $\tau\in A_i$, $i=1,2,\cdots,k$.

  Let $\lambda=\sum_{i=1}^ka_i\lambda_i$. Then $d_{M(X)}(\lambda,\nu)\le \epsilon_0$. Hence
  $$\int \phi \, d\lambda>\int \phi \, d\nu -\delta=c+6\delta$$ and
  $$h_{\lambda}(X,G)-\int \psi d \lambda\ge h_{\nu}(X,G)-\int \psi \, d\nu-2\gamma.$$
  Denote by $M=\max\{\sup_{x\in X}|\psi(x)|,\sup_{x\in X}|\phi(x)|,1\}$.
  Choose $\epsilon>0$ sufficiently small such that the following condition holds.
  \begin{enumerate}
    \item[{\bf (C1)}] If $d(x,y)<\epsilon$, then $$|\phi(x)-\phi(y)|<\delta \text{ and }|\psi(x)-\psi(y)|<\gamma.$$
  \end{enumerate}
  Let $L$ be the maximal cardinality of an $\epsilon$-separated set in $X$. And then choose $N\in\N$ sufficiently large such that the following three conditions hold.
  \begin{enumerate}
    \item[{\bf (C2)}] For all $n\ge N$, $\frac{|F_n\vartriangle gF_n|}{|F_n|}<\frac{\gamma}{kML|F|}$, where $F$ is the finite subset of $G$ associated with $\gamma$ in the definition of weak specification of $(X,G)$.
    \item[{\bf (C3)}] For each $1\le i\le k$, there exists $E_i\subseteq X$ with $\lambda_i(E_i)>\frac{1}{2}$ such that for every $x\in E_i$ and $n\ge N$,
    \begin{enumerate}
      \item $S_{F_n}\psi(x)\le|F_n|(\int\psi \, d\lambda_i+\gamma)$,
      \item $S_{F_n}\phi(x)\ge|F_n|(\int\phi \, d\lambda_i-2\delta)$.
    \end{enumerate}
    \item[{\bf (C4)}] $\forall n\ge N$, $N(F_n,4\epsilon,\frac{1}{2})>\exp(|F_n|(h_{i}-\gamma))$, for each $1\le i\le k$, where $h_{i}=h_{\lambda_i}(X,G)$.
  \end{enumerate}
  We note that {\bf (C2)} can be satisfied since $\{F_n\}$ is a F{\o}lner sequence, {\bf (C3)} holds by the ergodic theorem and {\bf (C4)} holds by Theorem \ref{Katok}.

  By Proposition \ref{prop-2-3}, there exists $N_1>N$ large enough, such that for any $n\ge N_1$, $F_{n}$ can be $\frac{\gamma}{kML|F|}$-quasi tiled by $\{F_{n_1},F_{n_2},\cdots,F_{n_l}\}$ with
  tiling centers $\{C_1,C_2,\cdots,C_l\}$ and all $n_j$'s are no less than $N$. We may require $N_1$ large enough to ensure that the family of translations
  $\mathcal{F}=\{F_{n_j}c_j: 1\le j\le l,c_j\in C_j\}$ can be partitioned into $k$ subfamilies $\mathcal{F}_i, 1\le i\le k$, such that
  \begin{align}\label{a_i-weighted}
    |\frac{|\cup \mathcal{F}_i|}{|F_n|}-a_i|<\frac{3\gamma}{kML|F|}.
  \end{align}
  Since the family $\mathcal{F}$ is $\frac{\gamma}{ML|F|}$-disjoint, we have
  \begin{align*}
    (1-\frac{\gamma}{ML|F|})\sum_{F_{n_j}c_j\in \mathcal{F}}|F_{n_j}|\le |\cup \mathcal{F}|\le |F_n|.
  \end{align*}
  Hence
  \begin{align}\label{F-weighted00}
    \sum_{F_{n_j}c_j\in \mathcal{F}}|F_{n_j}|-|\cup \mathcal{F}|\le \frac{\frac{\gamma}{ML|F|}}{1-\frac{\gamma}{ML|F|}}|F_n|\le \frac{2\gamma}{M}|F_n|
  \end{align}
  and
  \begin{align}\label{F-weighted0}
    \sum_{F_{n_j}c_j\in \mathcal{F}}|F_{n_j}|\le \frac{4}{3}|F_n|,
  \end{align}
  whence $\gamma$ is sufficiently small.

  {\bf Claim.} For each $F_{n_j}c_j$ there exists a subset $T_{c_j}$ of $F_{n_j}$ such that
  \begin{enumerate}
    \item $T_{c_j}c_j$'s are pairwise disjoint,
    \item $FT_{c_j}c_j$'s are also pairwise disjoint,
    \item $\frac{|T_{c_j}|}{|F_{n_j}|}>1-\frac{3\gamma}{ML}$.
  \end{enumerate}
  \begin{proof}[\bf Proof of the Claim.]
  To obtain $T_{c_j}$'s, we first notice that since the translations in $\mathcal{F}$ are $\frac{\gamma}{|F|}$-disjoint, there must exists for each $F_{n_j}c_j$ a subset
  $\tilde {T}_{c_j}\subseteq F_{n_j}$ with $\frac{|\tilde{T}_{c_j}|}{|F_{n_j}|}>1-\frac{\gamma}{ML|F|}$ such that $\tilde {T}_{c_j}c_j$'s are pairwise disjoint.
  We then let $T_{c_j}=\{t\in F_{n_j}: Ft\subseteq \tilde{T}_{c_j}\}\cap \tilde{T}_{c_j}$. Obviously $T_{c_j}\subseteq F_{n_j}$ and $T_{c_j}c_j$'s are pairwise disjoint.
  Moreover, $FT_{c_j}\subseteq \tilde{T}_{c_j}$ implies that $FT_{c_j}c_j$'s are also pairwise disjoint.
  From the construction of $T_{c_j}$,
  \begin{align*}
    F_{n_j}\setminus T_{c_j}&\subseteq \{t\in F_{n_j}: \exists g\in F \text{ such that }gt\notin \tilde{T}_{c_j}\}\cup (F_{n_j}\setminus \tilde{T}_{c_j})\\
    &\subseteq \cup_{g\in F}\{t\in F_{n_j}: gt\notin \tilde{T}_{c_j}\}\cup (F_{n_j}\setminus \tilde{T}_{c_j})\\
    &\subseteq \cup_{g\in F}\big (\{t\in F_{n_j}: gt\notin F_{n_j}\}\cup\{t\in F_{n_j}: gt\in F_{n_j}\setminus\tilde{T}_{c_j} \}\big )\cup (F_{n_j}\setminus \tilde{T}_{c_j}).
  \end{align*}
  Thus
  \begin{align*}
    |F_{n_j}\setminus T_{c_j}|&\le \sum_{g\in F}\big (|\{t\in F_{n_j}: gt\notin F_{n_j}\}|+|\{t\in F_{n_j}: gt\in F_{n_j}\setminus\tilde{T}_{c_j} \}|\big )+|F_{n_j}\setminus \tilde{T}_{c_j}|\\
    &\le \sum_{g\in F}\big (|gF_{n_j}\setminus F_{n_j}|+|F_{n_j}\setminus \tilde{T}_{c_j}|\big )+|F_{n_j}\setminus \tilde{T}_{c_j}|\\
    &\le(2|F|+1)|F_{n_j}|\frac{\gamma}{ML|F|}\le \frac{3\gamma}{ML}|F_{n_j}|,
  \end{align*}
  which implies that $\frac{|T_{c_j}|}{|F_{n_j}|}>1-\frac{3\gamma}{ML}$. This finishes the proof of the claim.
  \end{proof}

  We should mention here that $C_j$'s may not be pairwise disjoint, hence $T_{c_j}$ may not be associated with a unique index $c_j$.
  But if we replace the index $c_j$ by a pair $(c_j,j)$, this situation can be avoided.

  Denote by $\tilde{\mathcal{F}}=\{T_{c_j}c_j: F_{n_j}c_j\in \mathcal{F}\}$ and $\tilde{\mathcal{F}}_i=\{T_{c_j}c_j: F_{n_j}c_j\in \mathcal{F}_i\}$.
  Then
  \begin{align}\label{F-weighted}
    |\cup \tilde{\mathcal{F}}|=\sum_{F_{n_j}c_j\in {\mathcal{F}}}|T_{n_j}c_j|\ge (1-\frac{3\gamma}{ML})\sum_{F_{n_j}c_j\in {\mathcal{F}}}|F_{n_j}c_j|\ge (1-\frac{3\gamma}{ML})|F_n|.
  \end{align}

  Since $n_j\ge N$, for each $1\le i\le k$ and $F_{n_j}$, by {\bf (C3)}, for each $D=T_{c_j}c_j\in \tilde{\mathcal{F}}_i$, there exists $E_D\subseteq X$
  with $\lambda_i(E_D)>\frac{1}{2}$ such that
 for each $x\in E_D$,
 \begin{align*}
      S_{T_{c_j}}\psi(x)&=S_{F_{n_j}}\psi(x)-S_{F_{n_j}\setminus T_{c_j}}\psi(x)\\
                        &\le|F_{n_j}|(\int\psi \, d\lambda_i+\gamma)+|F_{n_j}|\cdot (1-\frac{|T_{c_j}|}{|F_{n_j}|})M\\
                        &\le |F_{n_j}|(\int\psi \, d\lambda_i+4\gamma)
 \end{align*}
and
 \begin{align*}
    S_{T_{c_j}}\phi(x)&=S_{F_{n_j}}\phi(x)-S_{F_{n_j}\setminus T_{c_j}}\phi(x)\\
                      &\ge|F_{n_j}|(\int\phi \, d\lambda_i-2\delta)-|F_{n_j}|\cdot (1-\frac{|T_{c_j}|}{|F_{n_j}|})M\\
                      &\ge|F_{n_j}|(\int\phi \, d\lambda_i-2\delta-3\gamma).
 \end{align*}

 Since $\lambda_i(E_D)>\frac{1}{2}$, by {\bf (C4)}, there exist at least $\lceil\exp(|F_{n_j}|(h_{\lambda_i}-\gamma))\rceil$-many $(F_{n_j},4\epsilon)$-separated points in $E_D$.
 And hence at least $\frac{\lceil\exp(|F_{n_j}|(h_{\lambda_i}-\gamma))\rceil}{L^{|F_{n_j}\setminus T_{n_j}|}}$-many $(T_{n_j},4\epsilon)$-separated points in $E_D$.
 Denote the collection of these separated points by $\tilde{E}_D$.
 Note that for any two different elements in $\tilde{\mathcal{F}}$, say $D_1$ and $D_2$, we have that $FD_1\cap D_2=\emptyset$. We then can apply the weak specification property.
 To each tuple $(x_{D})_{D\in\tilde{\mathcal{F}}}\in X^{\tilde{\mathcal{F}}}$ with $x_{D}\in c_j^{-1}E_{i,j}$ for $D=T_{c_j}c_j\subset F_{n_j}c_j$,
 there is a shadowing point $y\in X$ such that $d(gy,gx_D)<\epsilon$, for any $g\in D$. Then $B_{\cup \tilde{F}}(y,\epsilon)$'s are disjoint for different $(x_{D})$'s.
 This can be seen by the following. Let $(x_D^1)$ and $(x_D^2)$ be any two different such tuples with $x^1_D\neq x^2_D$ for some $D\in\tilde{\mathcal{F}}$ and let $y^1$ and $y^2$
 be the corresponding shadowing points. Since $x^1_D\in B_{D}(y^1,\epsilon)$ and $x^1_D\in B_{D}(y^1,\epsilon)$, if $B_{D}(y^1,\epsilon)\cap B_{D}(y^2,\epsilon)\neq\emptyset$,
 then $$d_{T_{n_j}}(c_jx^1_D,c_jx^2_D)=d_{D}(x^1_D,x^2_D)<4\epsilon.$$ This contradicts with the fact that $c_jx^1_D\neq c_jx^2_D\in \tilde{E}_D$. Note that $\cup\tilde{\mathcal{F}}\subset F_n$, hence $B_{F_n}(y,\epsilon)$'s are also disjoint for different $(x_{D})$'s.

 Now let us give a lower bound for the total number $Q$ of different $(x_{D})$'s. Since for each $D=T_{c_j}c_j\in \tilde{\mathcal{F}}_i$, the total number of choices of $x_D$, denoted by $Q_D$, is at least
 $\frac{\lceil\exp(|F_{n_j}|(h_{\lambda_i}-\gamma))\rceil}{L^{|F_{n_j}\setminus T_{n_j}|}}$, we have
 \begin{align*}
   Q&=\Pi_{i=1}^k\Pi_{D=T_{c_j}c_j\in \tilde{\mathcal{F}}_i}Q_D\\
    &\ge \Pi_{i=1}^k\Pi_{D=T_{c_j}c_j\in \tilde{\mathcal{F}}_i}\frac{\exp(|F_{n_j}|(h_{\lambda_i}-\gamma))}{L^{|F_{n_j}\setminus T_{n_j}|}}\\
    &\ge \exp(\sum_{i=1}^k\sum_{F_{n_j}c_j\in {\mathcal{F}}_i}|F_{n_j}|(h_{\lambda_i}-\gamma-\frac{3\gamma}{ML}\cdot\log L))\\
    &\ge \exp\bigg(\sum_{i=1}^k|\cup\mathcal{F}_i|(h_{\lambda_i}-4\gamma)\bigg)\\
    &\ge \exp\bigg(|F_n|\sum_{i=1}^k(a_i-\frac{3\gamma}{kML|F|})(h_{\lambda_i}-4\gamma)\bigg) \text{ (by \eqref{a_i-weighted})}.
 \end{align*}

 For any $z\in B_{F_n}(y,\epsilon)$,
 \begin{align*}
   S_{F_n}\phi(z)&=S_{\cup\tilde{\mathcal{F}}}\phi(z)+S_{F_n\setminus\cup\tilde{\mathcal{F}}}\phi(z)\\
                 &\ge \sum_{D\in \tilde{F}}\big(S_D\phi(x)-|S_D\phi(x)-S_D\phi(y)|-|S_D\phi(y)-S_D\phi(z)|\big)\\
                 &\qquad-|F_n|\cdot M\cdot(1-\frac{|\cup\tilde{\mathcal{F}}|}{|F_n|})\\
                 &\ge \sum_{D\in \tilde{F}}S_D\phi(x)-|F_n|2\delta-|F_n|3\gamma\text{ (by \eqref{F-weighted})}\\
                 &= \sum_{i=1}^k\sum_{D=T_{n_j}c_j\in \tilde{\mathcal{F}}_i}S_{T_{n_j}}\phi(c_jx)-|F_n|(2\delta+3\gamma)\\
                 &\ge \sum_{i=1}^k\sum_{T_{n_j}c_j\in \tilde{\mathcal{F}}_i}|F_{n_j}|(\int\phi \, d\lambda_i-2\delta-3\gamma)-|F_n|(2\delta+3\gamma)\\
                 &= \sum_{i=1}^k\sum_{F_{n_j}c_j\in {\mathcal{F}}_i}|F_{n_j}|\int\phi \, d\lambda_i-\sum_{F_{n_j}c_j\in {\mathcal{F}}}|F_{n_j}|(2\delta+3\gamma)-|F_n|(2\delta+3\gamma)\\
                 &\ge \sum_{i=1}^k\sum_{F_{n_j}c_j\in {\mathcal{F}}_i}|F_{n_j}|\int\phi \, d\lambda_i-|F_n|(5\delta+7\gamma)\text{ (by \eqref{F-weighted0})}.
 \end{align*}
 For the first term of the righthand side of the above inequation, we have
  \begin{align*}
                 &\quad\sum_{i=1}^k\sum_{F_{n_j}c_j\in {\mathcal{F}}_i}|F_{n_j}|\int\phi \, d\lambda_i\\
                 &\ge \sum_{i=1}^k\bigg(|\cup\mathcal{F}_i|\int\phi \, d\lambda_i-\big(\sum_{F_{n_j}c_j\in {\mathcal{F}}_i}|F_{n_j}|-|\cup\mathcal{F}_i|\big)M\bigg)\\
                 &\ge |F_n|\sum_{i=1}^k\bigg(a_i\int\phi \, d\lambda_i-\frac{3\gamma}{kML|F|}\cdot M \bigg)-\bigg(\sum_{F_{n_j}c_j\in {\mathcal{F}}}|F_{n_j}|-|\cup\mathcal{F}|\bigg)M\text{ (by \eqref{a_i-weighted})}\\
                 &\ge |F_n|\int\phi \, d\lambda-|F_n|5\gamma\text{ (by \eqref{F-weighted00})}.
 \end{align*}
Hence
 \begin{align*}
   S_{F_n}\phi(z)&\ge |F_n|\int\phi \, d\lambda-|F_n|5\gamma-|F_n|(5\delta+7\gamma)\\
                 &> |F_n|(c+\delta-12\gamma)\\
                 &> |F_n|c.
 \end{align*}
 Thus $A_{F_n}\phi(z)> c$, which can deduce that $B_{F_n}(y,\epsilon)\subseteq V_{F_n}$.

\

 Finally,
 \begin{align*}
   \frac{1}{|F_n|}\log \mu(V_{F_n})&\ge \frac{1}{|F_n|}\log \sum_{(x_{D})}\mu(B_{F_n}(y,\epsilon))\\
   &\ge \frac{1}{|F_n|}\log QC\exp(-S_{F_n}\psi(y))\\
   &\ge \frac{1}{|F_n|}\log \exp\bigg(|F_n|\sum_{i=1}^k(a_i-\frac{3\gamma}{kML|F|})(h_{\lambda_i}-4\gamma)\bigg)C\exp(-S_{F_n}\psi(y))\\
   &= \sum_{i=1}^k(a_i-\frac{3\gamma}{kML|F|})(h_{\lambda_i}-4\gamma)-A_{F_n}\psi(y)+\frac{1}{|F_n|}\log C.\\
 \end{align*}

 When $n$ goes to infinity and $\gamma$ goes to $0$, we obtain that
 $$\liminf_{n\rightarrow\infty}\frac{1}{|F_n|}\log \mu(V_{F_n})\ge h_{\nu}(X,G)-\int\psi \, d\nu.$$

\end{proof}

{\bf Acknowledgements}
The authors would like to express their gratitude to the referee for many valuable suggestions and comments. The research was supported by the National Basic Research Program of China (Grant No. 2013CB834100) and the National Natural
Science Foundation of China (Grant No. 11271191, 11431012).

\end{document}